		\theoremstyle{plain}
      \newtheorem{theorem}{Theorem}[section]
      \newtheorem{lemma}[theorem]{Lemma}
      \newtheorem{proposition}[theorem]{Proposition}     
      \newtheorem{corollary}[theorem]{Corollary}
      \theoremstyle{definition}
      \newtheorem{definition}[theorem]{Definition}       
      \newtheorem{example}[theorem]{Example}     
      \theoremstyle{remark}
      \newtheorem{remark}[theorem]{Remark}
      \newtheorem*{acknowledgments}{Acknowledgments}
\begin{document}

\renewcommand{\evenhead}{J A Vallejo}
\renewcommand{\oddhead}{Euler-Lagrange equations for functionals defined on Fréchet manifolds}

\thispagestyle{empty}


\copyrightnote{2009}{ J A Vallejo}

\Name{Euler-Lagrange equations for functionals defined on Fréchet manifolds}

\label{firstpage}

\Author{José Antonio VALLEJO}
\Address{     Facultad de Ciencias \\
              Universidad Autónoma de San Luis Potosí \\
              Lat. Av. Salvador Nava s/n CP78290 \\
              San Luis Potosí (SLP) México \\
              Tel.: +52-444-8262486 ext.2926 \\               
				  E-mail: jvallejo@fciencias.uaslp.mx}


\begin{abstract}
\noindent We prove a version of the variational Euler-Lagrange equations valid for functionals
defined on Fréchet manifolds, such as the spaces of sections of differentiable vector
bundles appearing in various physical theories.
\end{abstract}


\section{Introduction}

Calculus of variations is usually applied in Physics as follows: consider
a particle moving in $\mathbb{R}^{n}$ from a point $P$ to a point
$Q$. Among all possible paths $c:\mathbb{R}\rightarrow\mathbb{R}^{n}$,
the particle will follow the one that minimizes the action functional
\begin{equation}
A[c]=\int_{t_{P}}^{t_{Q}}L(c(t),\dot{c}(t))dt,
\label{eq:1}
\end{equation}
where $L\in \mathcal{C}^{\infty}(\mathbb{R}^{n}\times\mathbb{R}^{n})$
is called the Lagrange function or Lagrangian, subject to the end
points conditions
\begin{equation}
c(t_{P})=P,c(t_{Q})=Q.\label{eq:2}
\end{equation}
This path can be characterized as the solution (with initial conditions
(\ref{eq:2})) to the Euler-Lagrange equations
\begin{equation}
\frac{\partial L}{\partial\dot{x}^{i}}(c(t),\dot{c}(t))-
\frac{d}{dt}\frac{\partial L}{\partial\dot{x}^{i}}(c(t),\dot{c}(t))=0,\:1\leq i\leq n,
\label{eq:3}
\end{equation}
with $\{x^{i},y^{i}\}_{i=1}^{n}$ a set of coordinates on 
$\mathbb{R}^{n}\times\mathbb{R}^{n}$.

This is the setting of particle mechanics, but in Physics there are
more general constructions appropriate to deal with fields. A (vector)
field is a differentiable mapping $\psi:M\rightarrow\Gamma^{\infty}E$
where $M$ is a manifold (usually Minkowski's spacetime) and $E$
is some vector bundle on $M$, with $\Gamma^{\infty}E$ its space
of differentiable sections. For example, in quantum field theory $E$
is a spinor bundle on $M$ (see \cite{GMS 09,Sar 98,Wit 87}), in gauge field theory 
$E$ is a bundle of connections on a principal bundle over $M$ 
(see \cite{CMR 03,Fer 05,MV 81}), and in general relativity $E$ is the bundle of 
Riemannian metrics 
on $M$ (see \cite{HK 78,Nut 74,Sch 89}). The dynamical equations for 
these fields are the Euler-Lagrange equations for a functional of the type
(\ref{eq:1}), where $L$ depends now on the fields $\psi(x^{\mu})$
and its derivatives $\partial_{\nu}\psi(x^{\mu})$.

From the point of view of global analysis, these fields $\psi$ are
viewed as mappings $\psi:M\rightarrow N$ where $N$ is a tensor manifold,
so we are interested in studying critical points of functionals of
the form
\[
A[\psi]=\int_{M}\mathcal{L}(\psi,\partial\psi)dx
\]
where $\mathcal{L}\in T\mathcal{C}^{\infty}(M,N)$. The space of differentiable
mappings, $\mathcal{C}^{\infty}(M,N)$, can be viewed
as an infinite-dimensional manifold, modeled on Fréchet spaces if
$M$ is compact (a case%
\footnote{There are other reasons for considering Fréchet spaces. For example,
an infinite dimensional manifold is metrizable if and only if is paracompact
modeled on Fréchet spaces.%
} which encompasses situations of the most interest in Physics, such
as dynamics on a torus or a sphere, although it is possible to extend
the theory presented here to an arbitrary $M$, see e.g Example \ref{ex:1} below.)
So, we could as well consider a Fréchet infinite-dimensional manifold
$\mathcal{M}$ \emph{ab initio} and study the functionals defined
on it.

More generally, in order to deal with dynamical situations we will
consider curves $c:J\subset\mathbb{R}\rightarrow\mathcal{M}$ and
Lagrangians depending on the curve and its derivatives. Our main goal
is to obtain an expression for the Euler-Lagrange
equations in this setting as close as possible to that of the ``finite-dimensional''
calculus of variations (\ref{eq:3}). Indeed, the main result of this paper states that
if $\mathcal{L}\in C^{\infty}(T\mathcal{M},\mathbb{R})$ is such a Lagrangian,
then, a curve $c\in C^{\infty}(J,\mathcal{M})$ is critical for $\mathcal{L}$
if and only if it verifies the Euler-Lagrange equations\[
(D_{1}L)(u(t),u'(t))-\left.\frac{d}{d\xi}\right|_{\xi=t}(D_{2}L)(u(\xi),u'(\xi))=0,\]
in a local chart where $L$ and $u(t)$ are, respectively, the local
expressions for $\mathcal{L}$ and $c(t)$, and $D_{i}L$ $(i\in\{1,2\})$
are the partial derivatives of $L$ (all the notions appearing here will be explicitly
defined in Section \ref{sec:5}).

Note that this is not a trivial task, as many of the commonly used results in the
calculus of variations on, say, Banach spaces does not hold when directly translated to
the Fréchet setting.

In Section \ref{sec:1} we offer
some motivation to our problem by considering a variational description of Einstein's
equations. Then, in Section \ref{sec:2} we fix notations and list several facts from the theory 
of infinite-dimensional manifolds for reference. Sections \ref{sec:3} and \ref{sec:4}
contain some preliminary technical results that will be used later. Finally, in 
the last section we state and prove the main result.

\section{Einstein's equations as a variational system\label{sec:1}}

Let $M$ be a differential manifold and $(E,\pi,M)$ a vector bundle.
Denote by $\Gamma^{\infty}E$ the set of differentiable sections of
$E$, by $\Gamma_{K}^{\infty}E$ the set of differentiable sections
with compact support $K\subset M$, and by $\Gamma_{0}^{\infty}E$
the set of all differentiable sections with compact support. If $M$
itself is compact, then $\Gamma^{\infty}E$ with the $C^{\infty}-$topology
is a Fréchet space (that is, a locally convex topological vector space
which is metrizable and complete.) For an arbitrary $M$, if $K\subset M$
is a compact subset then $\Gamma_{K}^{\infty}E$ is a Fréchet space
contained in $\Gamma_{0}^{\infty}E$ and if we take a fundamental
sequence of compact subsets in $M$, $\{K_{i}\}_{i\in I}$ (so $M=\bigcup_{i\in I}K_{i}$),
$\Gamma_{0}^{\infty}E$ can be endowed with the inductive limit topology
and it becomes an LF-space (see \cite{Tre 06}.) We have a disjoint union
decomposition $\Gamma^{\infty}E=\bigsqcup_{s\in\Lambda}(\Gamma_{0}^{\infty}E+s)$,
where $\Lambda$ is a certain set of sections with support not necessarily
compact. A particularly interesting case is that in which $E=T^{(r,s)}M$
is a tensor manifold over $M$; its sections are denoted by $\Gamma^{\infty}(T^{(r,s)}M)$.

The sets above can be used to endow the set of mappings $\mathcal{C}^{\infty}(M,N)$
(with $M,N$ differential manifolds) with a manifold structure. To
build an atlas for $\mathcal{C}^{\infty}(M,N)$ take a Riemannian metric $g$
on $N$; then, there exists an open set $Z\subset TN$ containing
the $0-$section and such that, if we write $Z_{q}=Z\cap T_{q}N$,
$Z_{q}$ is contained in the domain in which the exponential mapping
$exp_{q}$ is a diffeomorphism. Now, given an $f\in \mathcal{C}^{\infty}(M,N)$
we construct a set $V_{f}\subset\Gamma_{0}^{\infty}(f^{*}TN)$ as\[
V_{f}=\{\zeta\in\Gamma_{0}^{\infty}(f^{*}TN):\zeta_{q}\in Z_{f(q)}\}\]
(here $f^{*}TN$ is the pullback bundle of $TN$ by $f:M\rightarrow N$).
This $V_{f}$ is an open set. We also construct a subset of $\mathcal{C}^{\infty}(M,N)$
containing $f$ through\[
U_{f}=\{\widetilde{f}:\exists K\subset M\: compact\: with\:\widetilde{f}|_{M-K}=
f|_{M-K}\: and\:\widetilde{f}(p)\in exp_{f(p)}(Z_{f(p)})\}.\]
The charts for $\mathcal{C}^{\infty}(M,N)$ are the bijections\[
\begin{array}{rcl}
\Psi_{f}:U_{f} & \rightarrow & V_{f}\\
\widetilde{f} & \mapsto & \Psi_{f}(\widetilde{f})=\zeta_{\widetilde{f}}\end{array}\]
where $\zeta_{\widetilde{f}}(p)=exp_{f(p)}^{-1}(\widetilde{f}(p))$.
It can be proved that the resulting differential structure is independent
of the chosen metric $g$ and open set $Z$.

Once an atlas of $\mathcal{C}^{\infty}(M,N)$ is known, it results that some
particularly important subsets as the immersions $\mathrm{Imm}(M,N)$,
the embeddings $\mathrm{Emb}(M,N)$, or diffeomorphisms $\mathrm{Diff}(M)$
are open, and so inherit a manifold structure.

\begin{example}
Consider the tensor bundle $S^{2}M$, of $2-$covariant symmetric
tensors on a manifold $M$. If $\mathcal{M}=S_{+}^{2}M$ denotes the
subspace of positive definite elements, then it admits a differential
manifold structure (the manifold of Riemannian metrics on $M$) modeled
on $\Gamma_{0}(S^{2}M)$ (the sections of compact support.) This structure
is such that given a metric $g\in\mathcal{M}$, its tangent space
is $T_{g}\mathcal{M}\simeq \Gamma_{0}(S^{2}M$, as $\mathcal{M}$ is open
in $S^{2}M$. A detailed study of the properties of $\mathcal{M}$,
including its Riemannian structure, can be found in \cite{GMi 91}.
\end{example}

By using this example, we can give a variational description on Einstein's
equations of General Relativity. Recall that a spacetime is a four
dimensional Lorentzian manifold $(\widetilde{N},\widetilde{g})$ and
that Einstein's vacuum equations are\begin{equation}
\rho(\widetilde{g})-\frac{1}{2}\tau(\widetilde{g})\widetilde{g}=0,\label{eq:a}\end{equation}
where $\rho$ is the Ricci tensor and $\tau$ the scalar curvature
. When $\widetilde{N}$ is globally hyperbolic, it can be viewed as
a foliation $\widetilde{N}=\bigcup_{t\in\mathbb{R}}M_{t}$ by Cauchy's
$3-$dimensional hypersurfaces%
\footnote{Connected and spacelike.%
}, all of them diffeomorphic to a manifold $M$ (see \cite{BS 03}
and references therein.) Thus, we have a curve of immersions\[
\begin{array}{rcl}
\varphi:I\subset\mathbb{R} & \rightarrow & \mathrm{Imm}(M,N)\\
t & \mapsto & \varphi(t)=\varphi_{t}\end{array},\]
with $\varphi_{t}(M)=M_{t}$ the leaves of the foliation. Each $M_{t}$
has an induced Riemannian structure, so we can also consider a curve
of metrics\begin{equation}
\begin{array}{rcl}
g:I\subset\mathbb{R} & \rightarrow & \mathcal{M}\\
t & \mapsto & g(t)\equiv g_{t}=\varphi_{t}^{*}g\end{array},\label{eq:b}\end{equation}
with $\mathcal{M}$ the manifold of Riemannian metrics of $M$. With
the aid of the $3+1$ or ADM formalism (see \cite{ADM 62}), it is possible to transfer
conditions \eqref{eq:a} on $\widetilde{g}$ to a set of conditions
for the curve \eqref{eq:b}. Without entering into details (which
will appear elsewhere, \cite{GV 09}), let us note that $\widetilde{g}$
is Ricci flat (that is, it verifies \eqref{eq:a}) if and only if
the curve $g_{t}$ satisfies\begin{equation}
g''_{t}=\Gamma_{g_{t}}^{-}(g_{t},g'_{t})-2\lambda\mathrm{Grad}^{-}S(g_{t})\label{eq:c}\end{equation}
and a set of other subsidiary conditions which do not interest us
at this point. In \eqref{eq:c}, $g'_{t}$ is the derivative of $g_{t}$
with respect to $t$, $\Gamma^{-}$ is the Christoffel symbol associated
to a certain metric $G^{-}$ on $\mathcal{M}$, and $\mathrm{Grad}^{-}$
is the gradient determined by $G^{-}$. $S(g_{t})$ is the total scalar
curvature of $g_{t}$.

Thus, a variational treatment of General Relativity can be achieved
if we write equations \eqref{eq:c} as the Euler-Lagrange's equations
for a suitable Lagrangian $\mathcal{L}\in \mathcal{C}^{\infty}(T\mathcal{M})$,
to be satisfied by the extremals of an action of the form\begin{equation}
A[g_{t}]=\int_{t_{A}}^{t_{B}}\mathcal{L}(g_{t},g'_{t})dt.\label{eq:d}\end{equation}
This correspond to the image of a $4-$dimensional universe obtained
through the time evolution of a $3-$dimensional ``slice'' $M$ in
such a way that it extremizes \eqref{eq:d}.

Note that $\mathcal{L}$ takes its arguments in the tangent bundle
of an infinite dimensional Fréchet manifold when $M$ is compact.
To find a set of equations we can call Euler-Lagrange's equations,
we must make use of the techniques of differential calculus in Fréchet
spaces, which presents some differences with the finite dimensional
case (there is no direct analog of the inverse function theorem, for
instance, see \cite{Ham 82}). Also, if we want to proceed as in the
classical calculus of variations, we must give a sense to the integral
of functionals defined on duals of Fréchet spaces%
\footnote{In the finite dimensional case, the integration takes place in the
dual of $\mathbb{R}^{n}$ which is canonically identified as an euclidean
space with $\mathbb{R}^{n}$ itself.%
}, as explained in Section \ref{sec:3}. These spaces are even ``worse'' than
the Fréchet ones (in particular, they are never Fréchet), but they
still are locally convex, a property which will suffice for our purpose.
These remarks should illustrate the differences between the finite
and infinite dimensional settings.

In this paper only a part of this programme will be presented, namely,
the deduction of Euler-Lagrange's equations for a functional of the
form \eqref{eq:d}, which is an important result \emph{per se} and
could be of interest in other situations of relevance in Physics,
as stated in the Introduction.

\section{Infinite dimensional manifolds\label{sec:2}}

For the sake of generality, we will work in the category of convenient vector spaces, which
are $c^{\infty}-$complete locally convex vector spaces (also called
bornologically complete). This setting is slightly more general than that of Fréchet
and LF-spaces, and both are equivalent when applicable. However, as stated, the class of 
convenient vector spaces is wider. For background definitions and proofs in
this section, we refer the reader to the comprehensive book by A.
Kriegl and P. W. Michor \cite{KM 97}.

\begin{definition}
Let $E$ be a locally convex vector space. A net $(x_{\gamma})_{\gamma\in\Gamma}\subset E$
is called Mackey-Cauchy if there exists a bounded (absolutely convex)
set $B\subset E$ and a net 
$(\mu_{(\gamma,\gamma')})_{(\gamma,\gamma')\in\Gamma\times\Gamma}\subset\mathbb{R}$
converging to $0\in\mathbb{R}$ such that 
$x_{\gamma}-x_{\gamma'}\in\mu_{(\gamma,\gamma')}B$.
The space $E$ is Mackey complete if every Mackey-Cauchy sequence
in $E$ converges.
\end{definition}
Obviously, Mackey-Cauchy sequences are Cauchy, so sequentially complete
locally convex spaces are Mackey complete.

\begin{proposition}
If $E$ is metrizable, sequential completeness and Mackey completeness
are equivalent. Moreover, Mackey completeness of $E$ is equivalent
to the following condition: each curve $c:\mathbb{R\rightarrow E}$
with the property that for each linear mapping $l:E\rightarrow\mathbb{R}$
then $l\circ c\in\mathcal{C}^{\infty}(\mathbb{R},\mathbb{R})$, belongs
to the class $\mathcal{C}^{\infty}$.
\end{proposition}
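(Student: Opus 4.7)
The plan is to handle the two equivalences separately. For the first one, the implication ``sequential completeness $\Rightarrow$ Mackey completeness'' is already recorded in the remark of the excerpt, so only the converse in the metrizable case needs work. Assuming $E$ is metrizable and Mackey complete, one fixes an increasing sequence $(p_k)_{k\in\mathbb{N}}$ of continuous seminorms generating the topology. Given a Cauchy sequence $(x_n)$, one extracts a subsequence $(y_k)$ satisfying $p_k(y_k-y_{k+1})\leq 2^{-k}$ and sets $z_k := k^{2}(y_k-y_{k+1})$. For each fixed $j$ the estimate $p_j(z_k)\leq k^{2}\,2^{-k}$ (valid once $k\geq j$) shows that $\{z_k\}$ is bounded in $E$, hence so is its closed absolutely convex hull $B$. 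The decomposition
\[
y_k - y_m \;=\; \sum_{i=k}^{m-1}\frac{z_i}{i^{2}} \;=\; \mu_{k,m}\sum_{i=k}^{m-1}\frac{i^{-2}}{\mu_{k,m}}\, z_i,\qquad \mu_{k,m}:=\sum_{i=k}^{m-1} i^{-2},
\]
exhibits $(y_k)$ as Mackey--Cauchy (the coefficients $i^{-2}/\mu_{k,m}$ are nonnegative and sum to $1$, placing the sum in $B$, while $\mu_{k,m}\to 0$ since $\sum i^{-2}<\infty$). Mackey completeness provides a limit $y$, and $(x_n)$, being Cauchy with a convergent subsequence, inherits it.

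For the second equivalence, the direction ``smoothness criterion $\Rightarrow$ Mackey completeness'' proceeds by a special curve construction. Given a Mackey--Cauchy sequence with witness $B$ and $\mu_{n,m}\to 0$, one builds an $E$-valued curve $c:\mathbb{R}\to E$ that interpolates (a reparameterised version of) the sequence and for which $\ell\circ c\in\mathcal{C}^{\infty}(\mathbb{R},\mathbb{R})$ holds automatically for every continuous linear functional $\ell$, since $\ell$ is bounded on $B$ and the question reduces to a scalar one. The hypothesis then promotes $c$ to a smooth $E$-valued curve, and its value at the limit parameter supplies the limit of the sequence.

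The converse implication is the substantive one and is where I expect the main obstacle. Assuming $E$ Mackey complete and a curve $c$ such that $\ell\circ c$ is $\mathcal{C}^{\infty}$ for every continuous linear $\ell$, one must upgrade this \emph{weak} smoothness to genuine $\mathcal{C}^{\infty}$-regularity of the $E$-valued map $c$. The outline is: (i) apply the uniform boundedness principle to the smoothness of the scalar curves $\ell\circ c$ in order to conclude that the $n$-th order difference quotients of $c$ form bounded subsets of $E$; (ii) use Mackey completeness to convert these bounded families into honest $E$-valued limits, thereby defining the derivatives $c^{(k)}(t)\in E$ through a vector-valued fundamental theorem of calculus; (iii) iterate on $k$. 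The delicate ingredient is step (ii): in the absence of a norm or a Bochner integral one must build the $E$-valued integrals via absolutely convex bounded hulls controlled by Mackey-null scalar factors, and Mackey completeness is exactly the hypothesis that makes this construction close. The full execution of this scheme is carried out in Kriegl--Michor \cite{KM 97}, which we adopt as the reference.
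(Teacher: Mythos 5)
The paper does not actually prove this proposition: it is stated as background, with all proofs in that section explicitly deferred to Kriegl--Michor \cite{KM 97} (this is their Theorem 2.14), so there is no internal argument to compare yours against. Judged on its own, your treatment of the first equivalence is complete and correct, and is the standard argument: extracting a subsequence with $p_k(y_k-y_{k+1})\le 2^{-k}$, rescaling to $z_k=k^2(y_k-y_{k+1})$ to obtain a bounded set $B$, writing $y_k-y_m$ as $\mu_{k,m}$ times a convex combination of the $z_i$ so that the subsequence is Mackey--Cauchy, and then recovering convergence of the full Cauchy sequence from that of the subsequence. All the small verifications (boundedness of $\{z_k\}$ seminorm by seminorm, boundedness of the closed absolutely convex hull, $\mu_{k,m}\to 0$ as a tail of a convergent series) check out.

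For the second equivalence you are in effect citing \cite{KM 97} rather than proving anything, which is no worse than what the paper itself does; your outline of the substantive direction (scalarwise smoothness plus Mackey completeness implies smoothness, via boundedness of difference quotients through the uniform boundedness principle, limits recovered from Mackey completeness, and induction on the order of differentiation) is the correct shape of that argument. However, one step of your sketch of the opposite direction is circular as written: to apply the hypothesis ``scalarwise smooth $\Rightarrow$ smooth'' to a curve interpolating a Mackey--Cauchy sequence, you need a curve $c:\mathbb{R}\to E$, hence a value $c(t_0)\in E$ at the accumulation parameter $t_0$; and $\ell\circ c$ can only be continuous at $t_0$ if $\ell(c(t_0))=\lim_n \ell(x_n)$ for every $\ell$, so you must already possess the limit in $E$ before the hypothesis can be invoked --- its ``value at the limit parameter'' cannot also ``supply the limit.'' Kriegl--Michor circumvent this by routing the implication through other equivalent conditions (existence of Riemann integrals/antiderivatives of Lipschitz or smooth curves) rather than through an interpolating curve with a prescribed endpoint; if you want a self-contained sketch you should either follow that route or explicitly work in the completion and explain why the limit point lands back in $E$.
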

Note that curves defined on a locally convex vector space have a natural
definition of differentiability. The spaces in which these equivalent
properties hold are the convenient vector spaces in the terminology
of Kriegl and Michor.

\begin{example}
Given a compact finite-dimensional differential manifold $M$ (not
necessarily compact) and a vector bundle $\pi:E\rightarrow M$, consider
a compact subset $K\subset M$. Then, the space of differentiable
sections with support on $K$, $\Gamma_{K}^{\infty}(E)$, is a Fréchet
space contained in $\Gamma_{0}^{\infty}(E)$, the space of differentiable
sections with compact support. If we take a fundamental sequence of
compacts in $M$, $\{K_{i}\}_{i=1}^{\infty}$, then $\Gamma_{0}^{\infty}(E)$
can be endowed with the inductive limit topology and is a convenient
vector space.
\label{ex:1}
\end{example}
Convenient vector spaces carry on the so called $c^{\infty}-$topology,
which is the final topology with respect to all smooth curves 
$c:\mathbb{R}\rightarrow E$.
Its open sets are called $c^{\infty}-$open sets.

Now, $C^{\infty}-$manifolds (of arbitrary dimensions) can be defined
by modeling them on convenient vector spaces and requiring that the
transition functions 
$u_{\alpha\beta}:u_{\beta}(U_{\alpha}\cap U_{\beta})
\rightarrow
u_{\alpha}(U_{\alpha}\cap U_{\beta})$
be smooth mappings\footnote{We use the symbol $C^{\infty}$ to denote the
class of such smooth mappings, reserving $\mathcal{C}^{\infty}$ for smooth mappings between
finite dimensional spaces} between $c^{\infty}-$open sets of a convenient
vector space (which means that these $u_{\alpha\beta}$ map smooth
curves in $u_{\beta}(U_{\alpha}\cap U_{\beta})$ to smooth curves
in $u_{\alpha}(U_{\alpha}\cap U_{\beta})$). A particular example has been considered
in the previous section, where the model space was Fréchet. Other constructions work
exactly as in the finite dimensional case.

\begin{definition}
A mapping $f:\mathcal{M}\rightarrow\mathcal{N}$ between manifolds
modeled on convenient vector spaces is smooth 
(denoted $f\in C^{\infty}(\mathcal{M},\mathcal{N})$)
if, for each $x\in\mathcal{M}$ and each chart $(\widetilde{U},v)$
around $f(x)\in\mathcal{N}$, there is a chart $(U,u)$ on $\mathcal{M}$
around $x$ such that $f(U)\subset\widetilde{U}$ and 
$v\circ f\circ u^{-1}:U\rightarrow\widetilde{U}$
is smooth as a mapping between $c^{\infty}-$open sets in convenient
vector spaces.
\end{definition}
Tangent spaces and tangent mappings are defined as usual.

\begin{example}
Let $E$ be a convenient vector space and $U\subset E$ a $c^{\infty}-$open
set. Then, viewed as an open submanifold (with the chart given by
the inclusion $j:U\hookrightarrow E$) at any $x\in U$ we have $T_{x}U\simeq E$.
\end{example}
In particular, tangent vectors can be viewed as tangents to smooth
curves as well as derivations over evaluation morphisms.

\begin{proposition}\label{pro:2}
If $f:\mathcal{M}\rightarrow\mathcal{N}$ is a smooth mapping between
manifolds modeled on convenient vector spaces, it induces a mapping
$T_{x}f:T_{x}\mathcal{M}\rightarrow T_{f(x)}\mathcal{N}$ for each
$x\in\mathcal{M}$ through the formula
\[
(T_{x}f(A))(h)=A(h\circ f)
\]
for $A\in T_{x}\mathcal{M}$, $h\in C^{\infty}(\mathcal{N},\mathbb{R})$.
\end{proposition}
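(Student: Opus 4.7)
The plan is to verify three things in order: (a) that the right-hand side of the defining formula makes sense for every $h\in C^{\infty}(\mathcal N,\mathbb R)$, (b) that the functional $h\mapsto A(h\circ f)$ is genuinely a tangent vector at $f(x)$ in the sense already fixed in this section, and (c) that the resulting map $T_{x}f$ is linear. Only then is the induced mapping well defined.

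For (a), I would invoke the closure of $C^{\infty}$ under composition in the convenient category: since $f\in C^{\infty}(\mathcal M,\mathcal N)$ and $h\in C^{\infty}(\mathcal N,\mathbb R)$, the composite $h\circ f$ sends smooth curves in $\mathcal M$ through any chart to smooth curves in the model space and hence lies in $C^{\infty}(\mathcal M,\mathbb R)$. Consequently $A(h\circ f)$ is defined for every $A\in T_{x}\mathcal M$, so $T_{x}f(A)$ is an $\mathbb R$-valued functional on $C^{\infty}(\mathcal N,\mathbb R)$.

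For (b), I would check that $T_{x}f(A)$ is a derivation over evaluation at $f(x)$, which is the operational description of $T_{f(x)}\mathcal N$ mentioned just before the statement. Linearity in $h$ is immediate from $(h_{1}+\lambda h_{2})\circ f=(h_{1}\circ f)+\lambda(h_{2}\circ f)$. For the Leibniz rule, I would use $(h_{1}h_{2})\circ f=(h_{1}\circ f)(h_{2}\circ f)$ and apply the derivation property of $A$ at $x$; evaluating the product factors at $x$ gives $h_{i}(f(x))$, yielding the Leibniz rule at $f(x)$, as required. Part (c) is then trivial because $(A+\lambda B)(h\circ f)=A(h\circ f)+\lambda B(h\circ f)$ by linearity of $A,B$.

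The main point of care, and the only step where something could go wrong, is (a): in the convenient setting the chain rule and preservation of smoothness under composition are not a priori obvious as in finite dimensions. They hold here only because smoothness between $c^{\infty}$-open sets is defined precisely by preserving smooth curves, making smoothness manifestly stable under composition; this is what allows the formula $A(h\circ f)$ to be interpreted unambiguously and consistently with both the kinematic and operational pictures of $T_{x}\mathcal M$ recalled above.
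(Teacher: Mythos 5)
The paper itself gives no proof of this proposition: it is quoted as background material, with all definitions and proofs in that section deferred to Kriegl and Michor \cite{KM 97}, so there is no in-paper argument to compare yours against. On its own terms your argument follows the standard route and steps (a)--(c) are carried out correctly: stability of smoothness under composition is indeed automatic in the convenient calculus precisely because smoothness is detected by smooth curves, and the linearity and Leibniz computations in (b) and (c) are routine once $A$ is treated as a derivation over evaluation at $x$.

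The one point you leave open is whether the functional $h\mapsto A(h\circ f)$ actually lies in $T_{f(x)}\mathcal{N}$. In the convenient setting the space of derivations over evaluation at a point (the operational tangent space) is in general strictly larger than the kinematic tangent space of velocity vectors of smooth curves, and it is the kinematic picture that the paper uses everywhere else (curves, local additions, sections of $c^{*}T\mathcal{M}$). Your step (b) only shows that $T_{x}f(A)$ is an operational tangent vector at $f(x)$. The gap closes with one line: writing $A$ kinematically as $A(g)=(g\circ c)'(0)$ for some smooth curve $c$ with $c(0)=x$, one gets $A(h\circ f)=\bigl(h\circ (f\circ c)\bigr)'(0)$, so $T_{x}f(A)$ is the kinematic tangent vector at $f(x)$ represented by the smooth curve $f\circ c$ (smooth again by the same composition stability you invoke in (a)); this also shows that the formula is independent of the representative curve and consistent with both descriptions of the tangent space that the paper mentions. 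With that addition your proof is complete.
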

For an infinite-dimensional manifold $\mathcal{M}$, an analogue of
the exponential mapping is introduced.

\begin{definition}
Let $T\mathcal{M}$ be the tangent bundle of $\mathcal{M}$, with
projection $\pi_{\mathcal{M}}$. Then, a mapping 
$\alpha:U\subset T\mathcal{M}\rightarrow\mathcal{M}$
defined on an open neighborhood of the zero section in $T\mathcal{M}$,
which satisfies:
\begin{enumerate}[(i)]
\item $(\pi_{\mathcal{M}},\alpha):U\subset T\mathcal{M}\rightarrow\mathcal{M}\times\mathcal{M}$
is a diffeomorphism onto a $c^{\infty}-$open neighborhood of the
origin,
\item $\alpha(0_{x})=x$ for all $x\in\mathcal{M}$,
\end{enumerate}
is called a local addition on $\mathcal{M}$.
\end{definition}

\begin{example}
\mbox{}
\begin{enumerate}[(a)]
\item The affine structure on each convenient vector space gives a local
addition.
\item Let $\pi:V\rightarrow M$ be a finite rank vector bundle over a finite
dimensional differential manifold $M$. Then, the space of sections
$\Gamma_{0}^{\infty}(V)$ admits a local addition. Moreover, $\Gamma_{0}^{\infty}(V)$
can be viewed as a splitting submanifold of $\mathcal{C}^{\infty}(M,V)$.
Even more, the total space $V$ admits a local addition.
\end{enumerate}
\end{example}

\begin{remark}
When $\mathcal{M}$ is an infinite-dimensional manifold admitting
a local addition, we have that the space $C^{\infty}(J,\mathcal{M})$,
where $J\subset\mathbb{R}$ is open, is a smooth manifold modeled
on spaces $\Gamma_{0}^{\infty}(c^{*}T\mathcal{M})$ of smooth sections
(with compact support) of pullback bundles along curves $c:J\rightarrow \mathcal{M}$.
In view of the preceding example, this is the case when $\mathcal{M}$
is a space of sections of a tensorial bundle over some compact finite-dimensional
manifold $M$, already discussed in the Introduction.
\end{remark}

\section{Integration on duals of Fréchet spaces\label{sec:3}}

In this section, we refer the reader to the text \cite{Rud 91} for definitions
and proofs unless otherwise explicitly stated.

Let $V$ be a real topological vector space. We denote by $V^{*}$
the space of continuous linear functionals $l:V\rightarrow\mathbb{R}$
and call it the dual space of $V$. This dual is a real vector space
too and it can be topologized in a variety of ways. In particular,
the weak$^{*}$ topology makes it a locally convex topological vector
space.

Recall that a family $\mathcal{F}=\{l_{\alpha}\}_{\alpha\in\Lambda}\subset V^{*}$
is said to separate points on $V$ if, for each $v_{1},v_{2}\in V$
such that $v_{1}\neq v_{2}$, there exists an $l_{\alpha}\in\mathcal{F}$
such that\[
l_{\alpha}(v_{1})\neq l_{\alpha}(v_{2}).\]
The following is an easy consequence of Hahn-Banach's theorem(s).

\begin{lemma}
\label{lem:1}If $V$ is a locally convex space then $V^{*}$ separates
points.
\end{lemma}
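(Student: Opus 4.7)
The plan is to reduce the separation statement to a standard application of the Hahn--Banach extension theorem, so I would treat this essentially as a textbook argument from \cite{Rud 91}. The only thing to verify is that the local convexity hypothesis is used in exactly the right place, namely to guarantee the existence of enough continuous seminorms.

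First, I would translate the conclusion to its additive form: given $v_1 \neq v_2$ in $V$, set $v = v_1 - v_2 \neq 0$ and seek a continuous linear $l \in V^*$ with $l(v) \neq 0$, which is equivalent to $l(v_1) \neq l(v_2)$. Since $V$ is locally convex (and Hausdorff, as built into the definition used here), its topology is generated by a family of continuous seminorms that separates points from $0$. In particular, since $v \neq 0$, I would pick a continuous seminorm $p$ on $V$ with $p(v) > 0$.

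Next, I would produce the functional on a one-dimensional subspace and then extend. On $W = \mathbb{R}v \subset V$ define $l_0 : W \to \mathbb{R}$ by $l_0(tv) = t\, p(v)$. Then $l_0$ is linear and dominated by $p$ on $W$, because
\[
|l_0(tv)| = |t|\, p(v) = p(tv).
\]
The real Hahn--Banach theorem then yields a linear extension $l : V \to \mathbb{R}$ of $l_0$ satisfying $|l(w)| \leq p(w)$ for every $w \in V$. Continuity of $p$ at $0$ immediately gives continuity of $l$ at $0$, and hence (by linearity) everywhere, so $l \in V^*$. Since $l(v) = l_0(v) = p(v) > 0$, we conclude $l(v_1) \neq l(v_2)$.

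There is essentially no obstacle here; the only subtle point to flag is making explicit that local convexity is what supplies the dominating seminorm $p$ (without it, Hahn--Banach in its sublinear form cannot be applied to produce nontrivial continuous functionals, as is well known for general topological vector spaces). Everything else is bookkeeping.
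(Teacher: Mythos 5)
Your proof is correct and follows exactly the route the paper intends: the paper gives no details, merely noting the lemma is ``an easy consequence of Hahn--Banach's theorem(s)'' (citing \cite{Rud 91}), and your argument is the standard such deduction, including the appropriate remark that Hausdorffness is needed to obtain a seminorm $p$ with $p(v_1-v_2)>0$.
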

We also recall the construction of a certain notion of integral for
vector valued functions.

\begin{definition}
\label{def:4}Let $\mu$ be a measure on a measure space $X$, $V$
a topological vector space such that $V^{*}$ separates points, and
$f:X\rightarrow V$ a function such that the scalar functions $l^{*}f$,
for every $l\in V^{*}$, are integrable with respect to $\mu$ (where
$l^{*}f:X\rightarrow\mathbb{R}$ is defined by $(l^{*}f)(x)=l(f(x))$.)
If there exists a vector $v\in V$ such that, for every $l^{*}\in V$\[
l(v)=\int_{X}(l^{*}f)d\mu,\]
then we define\[
\int_{X}fd\mu=v.\]
\end{definition}

This integral exists under very general conditions.

\begin{proposition}
\label{pro:3}Let $V$ be a topological vector space. Suppose that:
\begin{enumerate}[(i)]
\item $V^{*}$ separates points on $V$.
\item $\mu$ is a Borel probability measure on a compact Hausdorff space
$X$\label{enu:2}.
\end{enumerate}
If $f:X\rightarrow V$ is continuous and the closed convex hull $\overline{co}(f(X))$
is compact in $V$, then the integral\[
v=\int_{X}fd\mu\]
exists in the sense of Definition \ref{def:4}.
\end{proposition}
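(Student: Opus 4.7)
The plan is to reduce existence of $v = \int_X f\, d\mu$ to a finite intersection property argument on the compact convex set $K := \overline{co}(f(X))$. First I would note that for each $l \in V^{*}$ the composition $l\circ f : X \to \mathbb{R}$ is continuous on the compact space $X$, hence bounded and Borel-measurable, and therefore $\mu$-integrable; so the formula
\[
L(l) := \int_X (l\circ f)\, d\mu
\]
defines a real linear functional on $V^{*}$. What must be produced is a vector $v \in V$ with $l(v) = L(l)$ for every $l \in V^{*}$; uniqueness of such a $v$ is automatic from Lemma \ref{lem:1}, so only existence is at stake, and the strategy will be to locate $v$ inside the compact set $K$.

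For each finite family $\{l_1,\ldots,l_n\} \subset V^{*}$ I would introduce the closed subset of $K$
\[
E(l_1,\ldots,l_n) := \{w \in K : l_i(w) = L(l_i),\ 1 \leq i \leq n\}.
\]
Since $K$ is compact, it suffices to show that the collection of all such $E(l_1,\ldots,l_n)$ has the finite intersection property; any point in the intersection of the entire collection is then the desired $v$. Because $E(l_1,\ldots,l_n) \cap E(l_1',\ldots,l_m') = E(l_1,\ldots,l_n,l_1',\ldots,l_m')$, this finite intersection property reduces to showing that each individual $E(l_1,\ldots,l_n)$ is nonempty.

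This last step is where the real work lies, and where I expect the main obstacle, namely the absence of any Riesz-type representation for $L$ in this general setting. The plan is to push the problem into finite dimensions via the continuous linear map
\[
T : V \to \mathbb{R}^{n}, \qquad T(w) := (l_1(w),\ldots,l_n(w)),
\]
whose image $T(K)$ is a compact convex subset of $\mathbb{R}^{n}$. Nonemptiness of $E(l_1,\ldots,l_n)$ is equivalent to the inclusion $(L(l_1),\ldots,L(l_n)) \in T(K)$. If this failed, finite-dimensional Hahn-Banach separation would produce scalars $\alpha_1,\ldots,\alpha_n,c$ with $\sum_i \alpha_i l_i(w) \leq c$ for every $w \in K$ and $\sum_i \alpha_i L(l_i) > c$. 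Specializing $w = f(x)$, integrating against the probability measure $\mu$, and using linearity of the integral yields
\[
\sum_{i=1}^{n} \alpha_i L(l_i) \;=\; \int_X \sum_{i=1}^{n} \alpha_i (l_i\circ f)\, d\mu \;\leq\; c,
\]
contradicting $\sum_i \alpha_i L(l_i) > c$. This contradiction establishes the nonemptiness of each $E(l_1,\ldots,l_n)$ and closes the argument.
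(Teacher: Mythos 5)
Your argument is correct and complete. Note that the paper does not actually prove this proposition: Section \ref{sec:3} opens by deferring all proofs to Rudin's \emph{Functional Analysis} \cite{Rud 91}, where this statement is Theorem 3.27, and your proof --- the finite intersection property on the compact convex hull, reduction of nonemptiness to finite dimensions via $T(w)=(l_1(w),\ldots,l_n(w))$, and a separation-plus-integration contradiction --- is precisely the standard argument given there, so you have in effect reconstructed the proof the paper outsources.
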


We wish to adapt this construction to the setting we will use later,
in which $V=E^{*}$ is the dual of a Fréchet space $E$, and $X=[a,b]$
is a compact interval in the real line $\mathbb{R}$. We already know
(by Lemma \ref{lem:1}) that the dual of $E^{*}$ separates points
on $E^{*}$. Also, the Lebesgue measure on $[a,b]$ obviously satisfies
condition (\ref{enu:2}) in Proposition \ref{pro:3}. Thus, what we
need is to prove that given a continuous $f:[a,b]\rightarrow E^{*}$,
the closed convex hull $\overline{co}(f([a,b]))$ is compact in $E^{*}$.

\begin{proposition}
\label{pro:4}Let $[a,b]\subset\mathbb{R}$ and $f:[a,b]\rightarrow E^{*}$
be a continuous function, with $E$ a Fréchet space. Then, $\overline{co}(f([a,b]))$
is compact in $E^{*}$.
\end{proposition}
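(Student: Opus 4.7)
The plan is to exploit the weak-$*$ topology on $E^{*}$ together with the Banach--Alaoglu theorem, reducing compactness of the closed convex hull to showing that $f([a,b])$ lies inside a polar set. Throughout the argument I use that a Fréchet space is barrelled, so the uniform boundedness principle is available.

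First I would observe that since $f:[a,b]\to E^{*}$ is continuous (with the weak-$*$ topology on $E^{*}$, the one introduced at the start of Section~\ref{sec:3}), for every fixed $v\in E$ the scalar function $t\mapsto f(t)(v)$ is continuous on the compact interval $[a,b]$, hence bounded. Thus the family $\{f(t):t\in[a,b]\}\subset E^{*}$ is pointwise bounded on $E$. Applying the Banach--Steinhaus theorem in the Fréchet space $E$ (which is barrelled), this pointwise bounded family is equicontinuous: there exists a neighborhood $U$ of $0$ in $E$ such that
\[
|f(t)(v)|\le 1\quad\text{for all } t\in[a,b] \text{ and all } v\in U,
\]
that is, $f([a,b])\subset U^{\circ}$, where $U^{\circ}$ denotes the polar of $U$ in $E^{*}$.

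Second, I would invoke the standard properties of the polar set $U^{\circ}$: it is absolutely convex, and it is weak-$*$ closed because it is an intersection of the weak-$*$ closed half-spaces $\{\ell\in E^{*}: |\ell(v)|\le 1\}$ over $v\in U$. By the Banach--Alaoglu theorem (valid for any locally convex space, in particular for the Fréchet space $E$), $U^{\circ}$ is weak-$*$ compact.

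Finally, since $U^{\circ}$ is convex and weak-$*$ closed and contains $f([a,b])$, it also contains the weak-$*$ closed convex hull, so $\overline{co}(f([a,b]))\subset U^{\circ}$. As a weak-$*$ closed subset of the weak-$*$ compact set $U^{\circ}$, it is itself weak-$*$ compact, which is the desired conclusion. The only nontrivial step is the equicontinuity conclusion from pointwise boundedness; this is where the Fréchet (hence barrelled) hypothesis on $E$ is essential, and it is the reason the statement is phrased for Fréchet spaces rather than for more general locally convex ones.
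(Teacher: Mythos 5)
Your proof is correct, but it follows a genuinely different route from the paper's. The paper takes $K=f([a,b])$, which is compact since $f$ is continuous, and invokes Krein's theorem (K\"othe \S 24.5(5)): the closed convex hull of a compact set in a locally convex space is compact if and only if it is complete for the Mackey topology; it then quotes the fact that the dual of a Fr\'echet space is automatically complete in that topology (K\"othe \S 21.6(4)). You instead bypass Krein entirely: you use only the pointwise boundedness of $\{f(t)\}_{t\in[a,b]}$ (a much weaker consequence of continuity than compactness of the image), upgrade it to equicontinuity via Banach--Steinhaus in the barrelled space $E$, trap $f([a,b])$ inside a polar $U^{\circ}$, and finish with Banach--Alaoglu plus the fact that a weak-$*$ closed subset of a weak-$*$ compact set is compact. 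Both arguments lean on the Fr\'echet hypothesis, but through different properties: yours through barrelledness, the paper's through metrizability (which drives the completeness of $E^{*}$). Your version is more self-contained and makes explicit that the topology on $E^{*}$ is the weak-$*$ one --- a point the paper leaves implicit, although it is the reading forced by the integral construction of Definition \ref{def:4} and Proposition \ref{pro:3}, whose conclusion only requires weak-$*$ compactness of $\overline{co}(f([a,b]))$. The paper's version is shorter given the cited references and isolates the single completeness property of $E^{*}$ that makes Krein's criterion applicable. One cosmetic remark: the sets $\{\ell\in E^{*}:|\ell(v)|\le 1\}$ you intersect are not half-spaces but that does not affect the argument, since each is weak-$*$ closed as the preimage of $[-1,1]$ under the weak-$*$ continuous evaluation $\ell\mapsto\ell(v)$.
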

\begin{proof}
As $[a,b]$ is compact and $f$ continuous, $K=f([a,b])$ is compact
in $E^{*}$. Now, by Krein's theorem (see \cite{Kot 69} $\S$24.5(5)), the
closed convex hull $\overline{co}(K)$ of the compact set $K$ in
the locally convex space $E^{*}$ is compact if and only if $\overline{co}(K)$
is $\tau_{k}-$complete, where $\tau_{k}$ is the Mackey topology
on $E^{*}$. But $E$ is Fréchet, so $E^{*}$ is automatically $\tau_{k}-$complete
(see \cite{Kot 69} $\S$21.6(4)), so the closed set $\overline{co}(K)$ is also
$\tau_{k}-$complete and thus compact.
\end{proof}
\begin{corollary}
Let $[a,b]\subset\mathbb{R}$ and $f:[a,b]\rightarrow E^{*}$
be a continuous function with $E$ a Fréchet space. Then $l=\int_{[a,b]}fdt$
exists as a vector integral and $l\in E^{*}$. For every $e\in E$,\[
l(e)=\int_{[a,b]}(f(t))(e)dt.\]
\label{cor:1}
\end{corollary}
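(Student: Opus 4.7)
The plan is to obtain the corollary as a direct package of the three ingredients assembled in this section: Lemma \ref{lem:1}, Proposition \ref{pro:3}, and Proposition \ref{pro:4}. The only actual \emph{work} beyond citation is (i) reconciling ``Borel probability measure'' with Lebesgue measure on $[a,b]$, and (ii) exhibiting enough continuous linear functionals on $E^*$ to recover the stated pointwise formula.

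First I would verify the hypotheses of Proposition \ref{pro:3} with $V=E^*$ (equipped with a locally convex topology in which $f$ is continuous and for which Proposition \ref{pro:4} applies, e.g.\ the weak$^*$ topology, or the Mackey topology $\tau_k$ used in the proof of Proposition \ref{pro:4}). Since $E^*$ is locally convex, Lemma \ref{lem:1} gives that $(E^*)^*$ separates points on $E^*$. For the measure, the normalized Lebesgue measure $d\mu=\frac{1}{b-a}\,dt$ on $[a,b]$ is a Borel probability measure on the compact Hausdorff space $[a,b]$, so condition (ii) is satisfied; the compactness of the closed convex hull $\overline{co}(f([a,b]))$ is exactly the content of Proposition \ref{pro:4}. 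Proposition \ref{pro:3} therefore yields a vector $v\in E^*$ with
\[
L(v)=\int_{[a,b]} L(f(t))\,d\mu(t)\qquad\text{for every }L\in (E^*)^*.
\]
Undoing the normalization, $l:=(b-a)\,v\in E^*$ satisfies $L(l)=\int_{[a,b]} L(f(t))\,dt$ for every $L\in(E^*)^*$, which is precisely the definition $l=\int_{[a,b]} f\,dt$ in the sense of Definition \ref{def:4}.

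It remains to specialize $L$ to recover the pointwise formula. For each fixed $e\in E$ the evaluation functional
\[
\mathrm{ev}_e:E^*\longrightarrow\mathbb{R},\qquad \mathrm{ev}_e(\varphi)=\varphi(e),
\]
is linear and continuous on $E^*$ for any reasonable dual topology (in particular for the weak$^*$ and Mackey topologies), hence $\mathrm{ev}_e\in (E^*)^*$. Plugging $L=\mathrm{ev}_e$ into the identity above gives
\[
l(e)=\mathrm{ev}_e(l)=\int_{[a,b]} \mathrm{ev}_e(f(t))\,dt=\int_{[a,b]} (f(t))(e)\,dt,
\]
which is the claimed formula and completes the proof.

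I do not anticipate a genuine obstacle here: Proposition \ref{pro:4} has already absorbed the one nontrivial step (compactness of the closed convex hull via Krein's theorem and the $\tau_k$-completeness of the dual of a Fréchet space). The only points requiring care are the bookkeeping with the probability normalization of Lebesgue measure and noting explicitly that evaluation functionals live in $(E^*)^*$; both are routine.
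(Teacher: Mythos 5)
Your proof is correct and follows exactly the route the paper intends: the paper's own proof is the one-line ``Immediate from Proposition \ref{pro:4} and Definition \ref{def:4},'' and you have simply spelled out the details (verifying the hypotheses of Proposition \ref{pro:3} via Lemma \ref{lem:1} and Proposition \ref{pro:4}, normalizing Lebesgue measure, and specializing to the evaluation functionals $\mathrm{ev}_e\in(E^*)^*$ to get the pointwise formula). Nothing is missing; your write-up is a faithful expansion of the argument the paper leaves implicit.
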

\begin{proof}
Immediate from Proposition \ref{pro:4} and Definition \ref{def:4}.
\end{proof}
\begin{remark}
Of course, the same construction of the integral works for functions
$f:[a,b]\rightarrow E$, with $E$ Fréchet. In this case the situation
is even simpler, as for a Fréchet space it is well known that if $K\subset E$
si compact, then $\overline{co}(K)$ is compact too.
\end{remark}

\section{DuBois-Reymond Lemma\label{sec:4}}

In this section, we generalize the classical DuBois-Reymond lemma
from the theory of the calculus of variations, for later use.

\begin{lemma}
\label{lem:2}Let $E$ be a Fréchet space and $f:J\subset\mathbb{R}\rightarrow E^{*}$
a continuous mapping with $J=[a,b]$. Then\[
\int_{J}f(t)(\mu'(t))dt=0,\]
for all $\mu\in C^{1}(J,E)$ with support $\sup\mu\subset J$, if
and only if $f$ is constant.
\end{lemma}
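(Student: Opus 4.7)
The plan is to split into the two implications and reduce the non-trivial direction to the classical scalar DuBois-Reymond lemma by slicing the problem along each $e\in E$.

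For the forward direction, I would suppose $f(t)\equiv f_{0}\in E^{*}$ and observe that for any $\mu\in C^{1}(J,E)$ with $\mu(a)=\mu(b)=0$, the scalar curve $g:=f_{0}\circ\mu$ belongs to $C^{1}(J,\mathbb{R})$ with $g'(t)=f_{0}(\mu'(t))$ (linearity and continuity of $f_{0}$ give the chain rule). The fundamental theorem of calculus for real-valued $C^{1}$ functions then yields $\int_{J}f_{0}(\mu'(t))\,dt=g(b)-g(a)=0$.

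For the converse, I would fix an arbitrary $e\in E$ and set $f_{e}(t):=f(t)(e)$. Evaluation at $e$ is a continuous linear form on $E^{*}$, so $f_{e}:J\to\mathbb{R}$ is continuous. Given any scalar test function $\nu\in C^{1}(J,\mathbb{R})$ with $\nu(a)=\nu(b)=0$, the curve $\mu(t):=\nu(t)\,e$ lies in $C^{1}(J,E)$ with $\mu'(t)=\nu'(t)\,e$ and support in $J$, so the hypothesis specializes to
\[
\int_{J}f_{e}(t)\,\nu'(t)\,dt=\int_{J}f(t)(\nu'(t)\,e)\,dt=\int_{J}f(t)(\mu'(t))\,dt=0.
\]
Applying the classical (finite-dimensional) DuBois-Reymond lemma to $f_{e}$ shows that $f_{e}$ is constant on $J$; as this holds for every $e\in E$, we obtain $f(t)(e)=f(a)(e)$ for all $t$ and all $e$, and since elements of $E^{*}$ are determined by their values on $E$ this is exactly $f\equiv f(a)$.

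The argument is thus structurally very short: the entire Fréchet character of the target collapses under the slicing $e\mapsto f_e$. The only places where I would be careful are checking that $\mu(t)=\nu(t)e$ really is a $C^{1}$ curve into $E$ with derivative $\nu'(t)e$ (which reduces immediately to scalar differentiability of $\nu$) and that evaluation at $e$ preserves continuity from $E^{*}$ to $\mathbb{R}$ (true for any of the standard topologies on $E^{*}$ under consideration here). No appeal to the vector integral of Section \ref{sec:3} or to Corollary \ref{cor:1} is actually required in this particular lemma, although one could phrase the forward direction that way as well.
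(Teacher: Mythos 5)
Your proof is correct, and its key device --- restricting the hypothesis to the rank-one variations $\mu(t)=\nu(t)e$ with $\nu$ scalar and $e\in E$ fixed --- is exactly the one the paper uses (there $\mu(t)=y\phi(t)$). Where you diverge is in how the resulting scalar identity $\int_{J}f(t)(e)\,\nu'(t)\,dt=0$ is exploited: you invoke the classical one-dimensional DuBois-Reymond lemma for each fixed $e$ and then observe that an element of $E^{*}$ is determined by its values on $E$, whereas the paper re-runs the classical argument at the level of $E^{*}$, defining the candidate constant $l=\int_{J}f(t)\,dt$ as a vector integral in the sense of Section~\ref{sec:3} and choosing $\phi(t)=\int_{a}^{t}(f(s)-l)(y)\,ds$ so as to force $\int_{J}\bigl((f(t)-l)(y)\bigr)^{2}dt=0$. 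Your route is more elementary: it needs no appeal to Corollary~\ref{cor:1} or to the compactness of $\overline{co}(f(J))$, exactly as you note, and it also sidesteps a small normalization slip in the paper (the constant there should be the mean value $\frac{1}{b-a}\int_{J}f$ rather than $\int_{J}f$, since otherwise the chosen $\phi$ need not vanish at $t=b$). The paper's route has the compensating advantage of producing the constant directly as an element of $E^{*}$ and of exercising the vector integral, which is in any case indispensable in the very next statement (Proposition~\ref{pro:5}, whose conclusion involves $\int_{a}^{t}f(s)\,ds$). One caveat shared by both arguments: the test curves used merely vanish at the endpoints, while the support condition as literally stated would ask for vanishing near them; this is harmless but worth a word if you want hypothesis and usage to match exactly.
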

\begin{proof}
For the nontrivial implication, note that if $\phi\in C^{1}(J,\mathbb{R})$
and $e\in E$ is a fixed element, then the mapping $\mu:J\rightarrow E$
given by $\mu(t)=y\phi(t)$ verifies $\mu\in C^{1}(J,E)$ (and has
support $\sup\mu\subset J$ if $\sup\phi\subset J$). Thus, for any
fixed $l\in E^{*}$, $y\in E$, $\phi\in C^{1}(J,\mathbb{R})$,\[
\int_{J}(f(t)-l)(y\phi'(t))dt=0.\]
Take now\[
l=\int_{J}f(t)dt\]
(this integral in the sense of the preceding section) and\[
\phi(t)=\int_{a}^{t}(f(s)-l)(y)ds.\]
We have $\phi\in C^{1}(J,\mathbb{R})$ and, applying Corollary \ref{cor:1}
and the hypothesis,\[
\int_{J}((f(t)-l)(y))^{2}dt=0,\]
so $f(t)=l$, for all $t\in J$.
\end{proof}
\begin{proposition}[DuBois-Reymond Lemma]
\label{pro:5}
Let $f,g:J\subset\mathbb{R}\rightarrow E^{*}$ be continuous functions
with $J=[a,b]$. Then\[
\int_{J}\left\{ f(t)(\mu(t))+g(t))(\mu'(t))\right\} dt=0,\]
for all $\mu\in C^{1}(J,E)$ with support in $J$, if and only if
the mapping $h:J\rightarrow E^{*}$ given by\[
h(t)=g(t)-\int_{a}^{t}f(s)ds\]
is constant on $J$, that is, $f(t)=-g'(t)$.
\end{proposition}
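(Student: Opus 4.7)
The plan is to reduce Proposition~\ref{pro:5} to Lemma~\ref{lem:2} by an integration-by-parts step that trades the $f(t)(\mu(t))$ summand for one of the form $F(t)(\mu'(t))$, so that the hypothesis of the proposition becomes the hypothesis of Lemma~\ref{lem:2} applied to $g - F$. First I would introduce the primitive $F : J \to E^*$ by
\[
F(t) = \int_a^t f(s)\, ds,
\]
which exists as a vector integral in $E^*$ by Corollary~\ref{cor:1} (the restrictions $f|_{[a,t]}$ are continuous), and the same corollary supplies the pointwise formula $F(t)(e) = \int_a^t f(s)(e)\, ds$ for every $e \in E$.

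The key identity is
\[
\int_J F(t)(\mu'(t))\, dt = -\int_J f(t)(\mu(t))\, dt
\]
for every $\mu \in C^1(J, E)$ with $\operatorname{supp}\mu \subset J$ (in particular $\mu(a) = \mu(b) = 0$). I would derive it by writing $F(t)(\mu'(t)) = \int_a^t f(s)(\mu'(t))\, ds$ via Corollary~\ref{cor:1}, and then swapping the order of integration on the triangle $\{a \le s \le t \le b\}$ using classical scalar Fubini applied to the continuous $\mathbb{R}$-valued function $(s,t) \mapsto f(s)(\mu'(t))$. After the swap, the inner integral equals $f(s)\bigl(\int_s^b \mu'(t)\, dt\bigr) = f(s)(\mu(b) - \mu(s)) = -f(s)(\mu(s))$, where pulling the continuous linear functional $f(s) \in E^*$ through the $E$-valued Riemann integral of a continuous curve is standard. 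Substituting this identity, the hypothesis of the proposition becomes $\int_J (g(t) - F(t))(\mu'(t))\, dt = 0$ for every admissible $\mu$, and Lemma~\ref{lem:2} applied to the continuous $E^*$-valued map $h := g - F$ forces $h$ to be constant on $J$, which is exactly the statement. The converse implication is routine: if $g - F \equiv c$ is constant, then reversing the integration by parts reinstates $\int_J f(t)(\mu(t))\, dt$ from $-\int_J F(t)(\mu'(t))\, dt$, and the constant $c$ pairs with $\mu'(t)$ to give $c\bigl(\int_a^b \mu'(t)\, dt\bigr) = c(\mu(b) - \mu(a)) = 0$.

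The main technical obstacle is justifying the Fubini-type exchange and the extraction of $f(s)$ from the inner integral in a setting whose outer integration produces vectors in the locally convex dual $E^*$, where neither operation is automatic. The saving grace is Corollary~\ref{cor:1}: it lets one test everything against elements of $E$ and reduces each step to a claim about the scalar continuous integrand $f(s)(\mu'(t))$, for which classical one-variable results apply directly.
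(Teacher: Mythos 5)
Your proof is correct and follows the same overall strategy as the paper's: introduce the primitive $F(t)=\int_a^t f(s)\,ds$, convert the term $\int_J f(t)(\mu(t))\,dt$ into $-\int_J F(t)(\mu'(t))\,dt$, and then apply Lemma~\ref{lem:2} to $g-F$. The only point of divergence is how the integration-by-parts identity is justified. The paper differentiates the scalar function $t\mapsto F(t)(\mu(t))$ via the product rule, $\tfrac{d}{dt}\bigl(F(t)(\mu(t))\bigr)=F'(t)(\mu(t))+F(t)(\mu'(t))$, and integrates, using that $\mu$ vanishes at the endpoints; this presupposes that $F$ is differentiable as a curve in $E^{*}$ with $F'=f$ and that the Leibniz rule holds for the duality pairing. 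You instead test against elements of $E$ via Corollary~\ref{cor:1}, write $F(t)(\mu'(t))=\int_a^t f(s)(\mu'(t))\,ds$, and swap the order of integration over the triangle $a\le s\le t\le b$ by scalar Fubini, which avoids differentiating the $E^{*}$-valued primitive altogether and reduces every step to classical one-variable facts about a continuous scalar integrand. Your route is slightly more conservative about what is assumed in the locally convex setting, at the cost of a Fubini argument; the paper's is shorter but leans on the (unstated, though true) differentiability of $F$ and the product rule for the pairing. You also spell out the converse direction, which the paper leaves implicit.
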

\begin{proof}
Define\[
F(t)=\int_{a}^{t}f(s)ds\;(t\in J)\]
so that $f(t)=F'(t)$ and the statement we want to prove is equivalent
to\[
\int_{J}\left\{ F'(t)(\mu(t))+g(t)(\mu'(t))\right\} dt=0.\]
Now, noting that $\frac{d}{dt}\left(t\mapsto(F(t)(\mu(t))\right)=(F'(t))(\mu(t))+(F(t))(\mu'(t))$
and integrating by parts in the first member of the integrand (taking into account 
that $\sup\mu\subset J$),\[
\int_{J}\left\{ -F(t)(\mu'(t))+g(t)(\mu'(t))\right\} dt=0,\]
and the statement follows applying Lemma \ref{lem:2}.
\end{proof}

\section{The Euler-Lagrange equations\label{sec:5}}

Given a Fréchet manifold $\mathcal{M}$, there is defined a canonical
lifting of curves to the tangent bundle $\lambda:C^{\infty}(J,\mathcal{M})\rightarrow C^{\infty}(J,T\mathcal{M})$,
where $J\subset\mathbb{R}$. For each $\mathcal{L}\in C^{\infty}(T\mathcal{M},\mathbb{R})$
we will denote by $\overline{\mathcal{L}}:C^{\infty}(J,T\mathcal{M})\rightarrow\mathcal{C}^{\infty}(J)$
the mapping\[
\overline{\mathcal{L}}(\gamma)=\mathcal{L}\circ\gamma.\]

\begin{definition}
A smooth function $\mathcal{L}\in C^{\infty}(T\mathcal{M},\mathbb{R})$
is called a Lagrangian. For each open set $J\subset\mathbb{R}$ with
compact closure, we can construct the action functional\[
\begin{array}{rcl}
F_{\mathcal{L}}^{J}:C^{\infty}(J,\mathcal{M}) & \rightarrow & \mathbb{R}\\
c & \mapsto & \int_{J}(f_{\mathcal{L}}c)(t)dt\end{array},\]
where the action density associated to $\mathcal{L}$, $f_{\mathcal{L}}$,
is given by\[
f_{\mathcal{L}}=\overline{\mathcal{L}}\circ\lambda\]
(note that $f_{\mathcal{L}}c:J\rightarrow\mathbb{R}$ and, indeed,
$f_{\mathcal{L}}\in\mathcal{C}^{\infty}(J)$).
\end{definition}
In view of the results of O. Gil-Medrano (see \cite{GM 01}, Proposition 3), we define 
critical points of a density action as follows.

\begin{definition}
A curve $c\in C^{\infty}(J,\mathcal{M})$ is critical for the Lagrangian
$\mathcal{L}$ (or for the action density $f_{\mathcal{L}}$) if and
only if for each open set $J\subset\mathbb{R}$ with compact closure,\[
T_{c}F_{\mathcal{L}}^{J}(A)=0,\]
for all $A\in\Gamma^{\infty}(c^{*}T\mathcal{M})$ with support contained
in $J$.
\end{definition}
Now, we can state and prove the main result of the paper. Recall that if $U$ is a $c^{\infty}-$open set of a Fréchet space $E$, the partial derivatives of a mapping
$L:U\times E\rightarrow\mathbb{R}$, are defined by\[
D_{1}L(u,e)(f)=\lim_{\xi\rightarrow0}\frac{1}{\xi}\left(L(u+\xi f,e)-L(u,e)\right)\]
and\[
D_{2}L(u,e)(f)=\lim_{\xi\rightarrow0}\frac{1}{\xi}\left(L(u,e+\xi f)-L(u,e)\right),\]
for $(u,e)\in U\times E$ and $f\in E$.

\begin{theorem}
Let $\mathcal{L}\in C^{\infty}(T\mathcal{M},\mathbb{R})$ be a Lagrangian.
Then a curve $c\in C^{\infty}(J,\mathcal{M})$ is critical for $\mathcal{L}$
if and only if it verifies the Euler-Lagrange equations\[
(D_{1}L)(u(t),u'(t))-\left.\frac{d}{d\xi}\right|_{\xi=t}(D_{2}L)(u(\xi),u'(\xi))=0,\]
in a local chart where $L$ and $u(t)$ are, respectively, the local
expressions for $\mathcal{L}$ and $c(t)$, and $D_{i}L$ $(i\in\{1,2\})$
are the partial derivatives of $L$.
\end{theorem}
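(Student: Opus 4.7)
The plan is to transport the problem into a local chart, reduce the criticality condition to an integral identity against an arbitrary variation, and then apply the DuBois-Reymond lemma of Proposition \ref{pro:5}. Fix $c\in C^\infty(J,\mathcal{M})$ and a chart $(U,u)$ around $c(t_0)$ with $U$ a $c^\infty$-open subset of a Fréchet space $E$, so that $T\mathcal{M}$ is locally modeled on $U\times E$. Let $L:U\times E\rightarrow\mathbb{R}$ be the local expression of $\mathcal{L}$ and $u:J\rightarrow U$ the local expression of $c$; then the local expression of the lifted curve $\lambda(c)$ is $t\mapsto (u(t),u'(t))$. Any variation $A\in\Gamma^\infty(c^{*}T\mathcal{M})$ with support in $J$ corresponds locally to a smooth map $\mu:J\rightarrow E$ with $\sup\mu\subset J$, and the one-parameter family $c_\xi$ whose local expression is $u+\xi\mu$ realizes $A$ as its variation vector at $\xi=0$.

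Next I compute the tangent map $T_c F_{\mathcal{L}}^{J}(A)$. By definition,
\[
T_{c}F_{\mathcal{L}}^{J}(A)=\left.\frac{d}{d\xi}\right|_{\xi=0}\int_{J}L(u(t)+\xi\mu(t),u'(t)+\xi\mu'(t))\,dt.
\]
Smoothness of $L$ in the convenient setting allows the differentiation to be passed under the integral, and the chain rule in Fréchet spaces then yields
\[
T_{c}F_{\mathcal{L}}^{J}(A)=\int_{J}\left\{(D_{1}L)(u(t),u'(t))(\mu(t))+(D_{2}L)(u(t),u'(t))(\mu'(t))\right\}dt.
\]
I would note that since $L$ is smooth and $(u,u')$ is continuous, the maps $f(t)=(D_{1}L)(u(t),u'(t))$ and $g(t)=(D_{2}L)(u(t),u'(t))$ are continuous mappings $J\rightarrow E^{*}$, so the integral is well defined as the scalar pairing of continuous functionals against smooth curves in $E$, and the machinery of Section \ref{sec:3} applies.

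Having arrived at this expression, criticality of $c$ reads
\[
\int_{J}\left\{f(t)(\mu(t))+g(t)(\mu'(t))\right\}dt=0
\]
for every smooth $\mu:J\rightarrow E$ with support in $J$. Proposition \ref{pro:5} (DuBois-Reymond) immediately gives $f(t)=-g'(t)$, i.e.\ the required Euler-Lagrange identity
\[
(D_{1}L)(u(t),u'(t))-\left.\frac{d}{d\xi}\right|_{\xi=t}(D_{2}L)(u(\xi),u'(\xi))=0.
\]
The converse direction is the same computation read in reverse. The density of smooth compactly supported variations among the $C^{1}$ ones used in Proposition \ref{pro:5} is unproblematic, since any $C^{1}$ variation can be obtained from the tangent action of smooth ones by a standard mollification in $\mathbb{R}$ applied pointwise against functionals.

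The step I expect to be delicate is the interchange of the $\xi$-derivative with the integral sign, together with the verification that $D_{1}L$ and $D_{2}L$ truly take values in the topological dual $E^{*}$ rather than merely in the algebraic dual; without this, Krein's theorem and Corollary \ref{cor:1} would not be available and the whole DuBois-Reymond machinery would break down. In the convenient calculus this is guaranteed by the smoothness of $L$, and is precisely why the preparatory sections on integration and on the generalized DuBois-Reymond lemma were needed.
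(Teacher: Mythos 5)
Your proposal is correct and follows essentially the same route as the paper: localize in a chart, differentiate under the integral sign, apply the chain rule for smooth maps between $c^{\infty}$-open sets to obtain the integral identity $\int_{J}\{(D_{1}L)(u,u')(\mu)+(D_{2}L)(u,u')(\mu')\}\,dt=0$, and conclude with the generalized DuBois-Reymond lemma (Proposition \ref{pro:5}). The only cosmetic difference is that you realize the variation by the affine family $u+\xi\mu$ in the chart, while the paper works with a general family $\alpha_{s}$ of curves and identifies $\mu(t)=\left.\partial_{s}u_{s}(t)\right|_{s=0}$; your closing remarks on the continuity of $D_{i}L(u,u')$ as $E^{*}$-valued maps and on the smooth-versus-$C^{1}$ test functions actually address points the paper passes over in silence.
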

\begin{proof}
Let $\mathcal{M}$ be a differential Fréchet manifold modeled on the
Fréchet space $E$ and let $\varphi:\mathcal{U}\subset\mathcal{M}\rightarrow U\subset E$
be a local chart, with $U$ a $c^{\infty}-$open set in $E$. Let
$c\in C^{\infty}(J,\mathcal{M})$ be a critical point of $\mathcal{L}$
and suppose (for simplicity) that $c(J)\subset\mathcal{U}$. Finally,
let $A\in\Gamma^{\infty}(c^{*}T\mathcal{M})$ be such that $\sup A\subset J$.
We have that $A$ is of the form\[
A=\left.\frac{d}{ds}\right|_{s=0}\alpha_{s}\]
for some curve\[
\begin{array}{rcl}
\alpha:\mathbb{R} & \rightarrow & C^{\infty}(J,\mathcal{M})\\
s & \mapsto & \alpha_{s}\end{array}\]
such that $\alpha_{0}=c:J\rightarrow\mathcal{M}$. Then (recall Proposition
\ref{pro:2})\[
0=T_{c}F_{\mathcal{L}}^{J}(A)=\left.\frac{d}{ds}\right|_{s=0}\left(s\mapsto\int_{J}(f_{\mathcal{L}}\alpha_{s})(t)dt\right).\]
Applying the rule for differentiation under the integral to the $\mathcal{C}^{\infty}$
integrand $f_{\mathcal{L}}\alpha:\mathbb{R}\times\mathbb{R}\mathbb{\rightarrow R}$
(where $(f_{\mathcal{L}}\alpha)(s,t)=(f_{\mathcal{L}}\alpha_{s})(t)$),
we get\begin{equation}
0=T_{c}F_{\mathcal{L}}^{J}(A)=\int_{J}\left(s\mapsto\left.\frac{\partial}{\partial s}\right|_{s=0}(f_{\mathcal{L}}\alpha_{s})(t)\right)dt.\label{eq:4}\end{equation}
Let us write $\varphi\circ\alpha_{s}=u_{s}:\mathbb{R}\rightarrow U$
for the local representatives of the curves $\alpha_{s}$, so $u_{s}\in C^{\infty}(J,U)$.
In particular, we will put $u=u_{0}=\varphi\circ c$. Also, let us
write $L$ for the local representative of $\mathcal{L}$ in the chart
$(\mathcal{U},\varphi)$ of $\mathcal{M}$ (and the identity on $\mathbb{R}$),
so $L=id\circ\mathcal{L}\circ(T\varphi)^{-1}$. Then (\ref{eq:4})
can be written locally as\begin{equation}
0=T_{c}F_{\mathcal{L}}^{J}(A)=\int_{J}\left(s\mapsto\left.\frac{\partial}{\partial s}\right|_{s=0}L(u_{s}(t),u'_{s}(t))\right)dt,\label{eq:5}\end{equation}
where\[
u'_{s}(t)=\frac{\partial}{\partial t}u_{s}(t).\]
Note that our setting can be represented as the integration of the
derivative evaluated at zero with respect to the second argument of
a composite map\[
\mathbb{R}\times\mathbb{R}\overset{(u,u')}{\longrightarrow}U\times E\subset E\times E\overset{L}{\longrightarrow}\mathbb{R},\]
where $U$ is a $c^{\infty}-$open set of a Fréchet space $E$. Applying
the chain rule (see Theorem 3.18 in \cite{KM 97} and Theorem 3.3.4
in \cite{Ham 82}), we get\begin{eqnarray}
0 &=& T_{c}F_{\mathcal{L}}^{J}(A)\label{eq:6}\\
  &=& \nonumber \int_{J}\left\{ D_{1}L(u(t),u'(t))\left(\left.\frac{\partial u_{s}(t)}{\partial s}\right|_{s=0}\right)+D_{2}L(u(t),u'(t))\left(\left.\frac{\partial^{2}u_{s}(t)}{\partial s\partial dt}\right|_{s=0}\right)\right\} dt
\end{eqnarray}
where $D_{1}L$ and $D_{2}L$ are the partial derivatives of $L:U\times E\rightarrow\mathbb{R}$. Note that, under the assumption
$L\in C^{\infty}(U\times E,\mathbb{R}$), the function $L$ has total
derivative $dL$ and, in this case,\[
dL(e,u)(f,g)=(D_{1}L)(e,u)(f)+(D_{2}L)(e,u)(g).\]
Moreover, we have linearity in the argument $f\in E$. Thus, we can
interpret the expression (\ref{eq:6}) as an integral of the form\[
\int_{J}\left\{ f(t)(\mu(t))+g(t))(\mu'(t))\right\} dt=0,\]
where $f(t)=D_{1}L(u(t),u'(t))$, $g(t)=D_{2}L(u(t),u'(t))$ and $\mu(t)=\left.\frac{\partial u_{s}(t)}{\partial s}\right|_{s=0}$.
Applying the DuBois-Reymond Lemma (Proposition \ref{pro:5}), we get the desired
result.
\end{proof}

\begin{acknowledgments}
The author wants to thank Olga Gil-Medrano for useful discussions 
and guidance while doing this work. Thanks are also due to the anonymous referee
whose comments helped to organize the presentation. \\
Partially supported by the Ministerio
de Educaci\'on y Ciencia of Spain, under grant code MTM$2005-04947$ and
a SEP-CONACyT project CB (J2) 2007-1 code 78791 (México).
\end{acknowledgments}

\label{lastpage}
\end{document}